\newif\ifdraft
\newif\ifoc 
\newcommandx{\nt}[2][1=]{\todo[linecolor=red,
			backgroundcolor=red!10,bordercolor=red,#1]{#2}}
\newcommandx{\jy}[2][1=]{\todo[linecolor=green,
			backgroundcolor=green!10,bordercolor=green,#1]{JY: #2}}
\newcommandx{\sw}[2][1=]{\todo[linecolor=blue,
			backgroundcolor=blue!10,bordercolor=blue,#1]{SW: #2}}
\newcommand{\nt}[1]{{}}
\newcommand{\jg}[1]{{}}
\newcommand{\jy}[1]{{}}
\newcommand{\sw}[1]{{}}
\newif\iftwocolumn
\newtheorem{problem}{Problem}[section]
\newtheorem{corollary}{Corollary}[section]
\newtheorem{theorem}{Theorem}[section]
\theoremstyle{definition}
\theoremstyle{remark}
\newtheorem*{remark}{Remark}
\def\subsubsection{\@startsection{subsubsection}
                                 {3}
                                 {\z@ \hspace*{1mm}}
                                 {0ex plus 0.1ex minus 0.1ex}
                                 {0ex}
                                 {\normalfont\normalsize\itshape}}
\def\R{\mathcal R}
\def\C{\mathcal C}
\def\P{\mathcal P}
\def\W{\mathcal W}
\def\opg{{\sc {OPG}}\xspace}
\def\opglr{{\sc {OPG${}_{LR}$}}\xspace}
\def\opglrd{{\sc {D-OPG${}_{LR}$}}\xspace}
\def\opgmc{{\sc {OPG${}_{MC}$}}\xspace}
\def\twopart{\textbf{\textsc{Partition}}\xspace}
\def\tpart{\textbf{\textsc{$3$-Partition}}\xspace}
\def\ttkp{\textbf{\textsc{Knapsack}}\xspace}
\def\ttukp{\textbf{\textsc{Unbounded Knapsack}}\xspace}
\def\subsetsum{\textbf{\textsc{Subset Sum}}\xspace}
\title{
Optimal Perimeter Guarding with Heterogeneous Robot Teams: 
Complexity Analysis and Effective Algorithms
}
\author{
Si Wei Feng  \qquad Jingjin Yu
\thanks{
S.-W. Feng and J. Yu are with the Department of Computer Science, 
Rutgers, the State University of New Jersey, Piscataway, NJ, 
USA. E-Mails: \{{\tt siwei.feng, jingjin.yu}\}\hspace*{.25em}
\MVAt \hspace*{.25em}rutgers.edu. 
}
}
\begin{document}

\maketitle
\thispagestyle{empty}
\pagestyle{empty}

\ifdraft
\begin{picture}(0,0)%
\put(-12,105){
\framebox(505,40){\parbox{\dimexpr2\linewidth+\fboxsep-\fboxrule}{
\textcolor{blue}{
The file is formatted to look identical to the final compiled IEEE 
conference PDF, with additional margins added for making margin 
notes. Use $\backslash$todo$\{$...$\}$ for general side comments
and $\backslash$jy$\{$...$\}$ for JJ's comments. Set 
$\backslash$drafttrue to $\backslash$draftfalse to remove the 
formatting. 
}}}}
\end{picture}
\vspace*{-4mm}
\fi


\begin{abstract}
We perform structural and algorithmic studies of significantly generalized
versions of the optimal perimeter guarding (\opg) problem 
\cite{FenHanGaoYu19RSS}. As compared with the original \opg  where robots 
are uniform, in this paper, many mobile robots with heterogeneous sensing 
capabilities are to be deployed to optimally guard a set of one-dimensional 
segments. Two complimentary formulations are investigated where one limits 
the number of available robots (\opglr) and the other seeks to minimize 
the total deployment cost (\opgmc). In contrast to the original \opg which 
admits low-polynomial time solutions, both \opglr and \opgmc are 
computationally intractable with \opglr being strongly NP-hard. Nevertheless, 
we develop fairly scalable pseudo-polynomial time algorithms for practical, 
fixed-parameter subcase of \opglr; we also develop pseudo-polynomial time 
algorithm for general \opgmc and polynomial time algorithm for the fixed-parameter
\opgmc case. The applicability and effectiveness of selected 
algorithms are demonstrated through extensive numerical experiments.
\end{abstract}

\section{Introduction}\label{sec:intro}
Consider the scenario where many mobile guards (or sensors) are to be deployed 
to patrol 
the perimeter of some 2D regions (Fig.~\ref{fig:ex}) against intrusion, where 
each guard may effectively cover a continuous segment of a region's boundary. 
When part of a boundary need not be secured, e.g., there may already be 
some existing barriers (the blue segments in Fig.~\ref{fig:ex}), optimally 
distributing the robots so that each robot's coverage is minimized becomes 
an interesting and non-trivial computational task \cite{FenHanGaoYu19RSS}. 
It is established \cite{FenHanGaoYu19RSS} that, when the guards have 
the same capabilities, the problem, called the {\em optimal perimeter guarding} 
(\opg), resides in the complexity class P (polynomial time class), 
even when the robots must be distributed across many different boundaries. 

\begin{figure}[ht]
\begin{center}
\begin{overpic}[width={\ifoc 4in \else 3in \fi},tics=5]{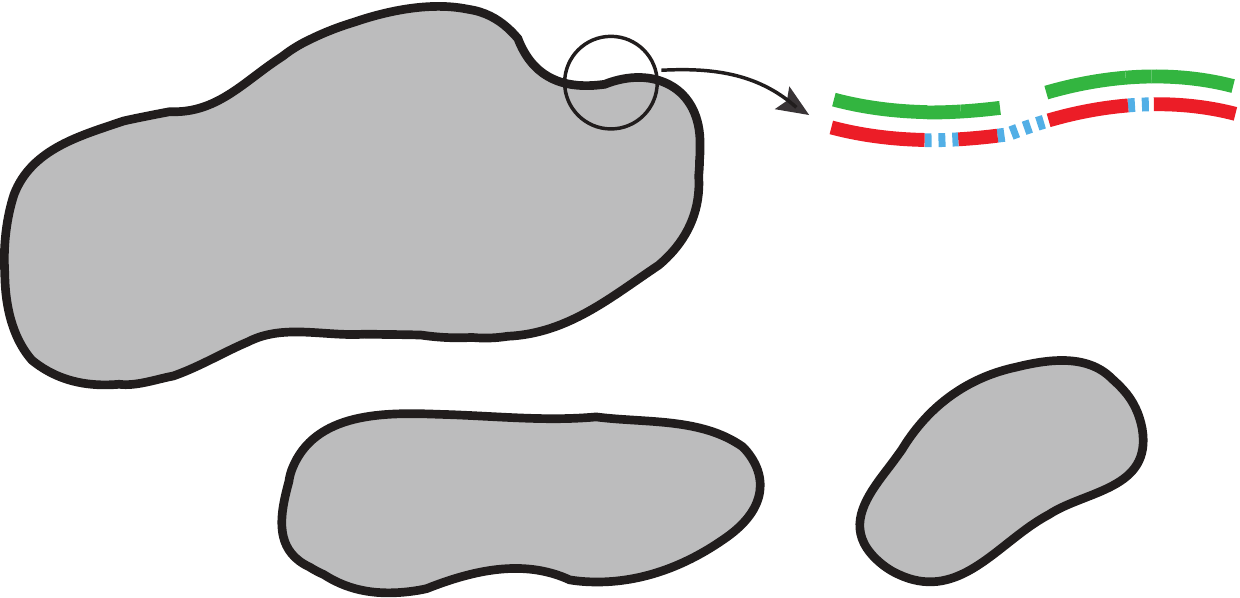}
\end{overpic}
\end{center}
\caption{\label{fig:ex} A scenario where boundaries of three (gray) 
regions must be secured. Zooming in on part of the boundary of one 
of the regions (the part inside the small circle), portions of the 
boundary (the red segments) must be guarded while the rest (the 
blue dotted segments) does not need guarding. For example, the zoomed-in 
part of the boundary may be monitored by two mobile robots, each patrolling
along one of the green segments.}
\end{figure}

In this work, we investigate a significantly more general version of \opg 
where the mobile guards may be heterogeneous. More specifically, two 
formulations with different guarding/sensing models are addressed in our 
study. 
In the first, the number of available robots is fixed where robots of 
different types have a fixed ratio of capability (e.g., one type of 
robot may be able to run faster or may have better sensor). The guarding task 
must be evenly divided among the robots so that each robot, regardless of 
type, will not need to bear a too large coverage/capability ratio. This 
formulation is denoted as {\em optimal perimeter guarding with limited 
resources} or \opglr.
In the second, the number of robots is unlimited; instead, for each type, 
the sensing range is fixed with a fixed associated cost. The goal here is 
to find a deployment plan so as to fully cover the perimeter while minimizing 
the total cost. We call this the {\em optimal perimeter guarding with 
minimum cost} problem, or \opgmc. 

Unlike the plain vanilla version of the \opg problem, we establish that both 
\opglr and \opgmc are NP-hard when the number of robot types is part of the 
problem input. They are, however, at different hardness levels. \opglr is shown 
to be NP-hard in the strong sense, thus reducing the likelihood of finding a fully
polynomial time approximation scheme (FPTAS).
Nevertheless, for the more practical case where the number of robot types 
is a constant, we show that \opglr can be solved using a pseudo-polynomial 
time algorithm with reasonable scalability. On the other hand, we show that 
\opgmc is weakly NP-hard through the establishment of a pseudo-polynomial 
time algorithm for \opgmc with arbitrary number of robot types. 
We further show that, when the number of robot types is fixed, \opgmc can be 
solved in polynomial time through a fixed-parameter tractable (FPT) approach.
This paragraph also summarizes the main contributions of this work. 

A main motivation behind our study of the \opg formulations is to address 
a key missing element in executing autonomous, scalable, and optimal robot 
deployment tasks. Whereas much research has been devoted to multi-robot 
motion planning \cite{ErdLoz86,arai2002advances} with great success, e.g., 
\cite{blm-rvo,smith2009monotonic,ayanian2010decentralized,turpin2014capt,
alonso2015multi,SolYu15}, existing results in the robotics literature appear 
to generally assume that a target robot distribution is already provided; the 
problem of how to effectively generate optimal deployment patterns is largely 
left unaddressed. It should be noted that control-based solutions to the 
multi-agent deployment problem do exist, e.g.,\cite{ando1999distributed,
jadbabaie2003coordination,cortes2004coverage,ren2005consensus,
schwager2009optimal,yu2012rendezvous,morgan2016swarm}, but the final solutions 
are obtained through many local iterations and generally do not come with 
global optimality guarantees. For example, in \cite{cortes2004coverage}, 
Voronoi-based iterative methods compute locally optimal target formations 
for various useful tasks. In contrast, this work, as well as 
\cite{FenHanGaoYu19RSS}, targets the scalable computation of globally optimal 
solutions. 

As a coverage problem, \opg may be characterized as a 1D version 
of the well-studied Art Gallery problems  \cite{o1987art,shermer1992recent},
which commonly assume a sensing model based on line-of-sight 
visibility\cite{lozano1979algorithm}; the goal is to ensure that every point
in the  interior of a given region is visible to at least one of the deployed 
guards. Depending on the exact formulation, guards may be placed on 
boundaries, corners, or the interior of the region. Not surprisingly, Art
Gallery problems are typically NP-hard \cite{lee1986computational}. Other
than Art Gallery, 2D coverage problems with other sensing models, e.g., 
disc-based, have also been considered \cite{thue1910dichteste,hales2005proof,
drezner1995facility,cortes2004coverage,pavone2009equitable
,pierson2017adapting}, where some formulations prevent the overlapping 
of individual sensing ranges \cite{thue1910dichteste,hales2005proof} while 
others seek to ensure a full coverage which often requires intersection
of sensor ranges. 
In viewing of these studies, this study helps painting a broader landscape 
of sensor coverage research.

In terms of structural resemblance, \opglr and \opgmc share many similarities 
with {\em bin packing}  \cite{johnson1973near} and other related problems. 
In a bin packing problem, objects are to be selected to fit within bins of 
given sizes. Viewing the segments (the red ones in Fig.~\ref{fig:ex}) as 
bins, \opg seeks to place guards so that the segments are fully contained in 
the union of the guards' joint coverage span. In this regard, \opg is a dual
problem to bin packing since the former must overfill the bins and the later 
cannot fully fill the bins. In the extreme, however, both bin packing and 
\opg converge to a \subsetsum \cite{karp1972reducibility} like problem where 
one seeks to partition objects into halves of equal total sizes, i.e., the 
objects should fit exactly within the bins. With an additional cost term, 
\opgmc has further similarities with the \ttkp problem \cite{ukphardness}, 
which is weakly NP-hard \cite{dantzig1957discrete}.

The rest of the paper is organized as follows. In Section~\ref{sec:problem},
mathematical formulations of the two \opg variants are fully specified. In
Section~\ref{sec:hardness}, both \opglr and \opgmc are shown to be 
NP-hard. Despite the hardness hurdles, in Section~\ref{sec:algorithm}, 
multiple algorithms are derived for \opglr and \opgmc, including effective
implementable solutions for both. In Section~\ref{sec:application},
we perform numerical evaluation of selected algorithms and demonstrate 
how they may be applied to address multi-robot deployment problems. We 
discuss and conclude our study in Section~\ref{sec:conclusion}. Please
see \url{https://youtu.be/6gYL0_B3YTk} for an illustration of the problems 
and selected instances/solutions. 

\section{Preliminaries}\label{sec:problem}
Let $\W \subset \mathbb R^2$ be a compact (closed and bounded) 
two-dimensional workspace. There are  $m$ pairwise disjoint {\em 
	regions} $\R = \{R_1, \ldots, R_m\}$ where each region $R_i \subset \W$ 
is homeomorphic to the closed unit disc, i.e., there exists a continuous 
bijection $f_i: R_i \to \{(x, y) \mid x^2 + y^2 \le 1\}$ for all $1 \le 
i \le m$. For a given region $R_i$, let $\partial R_i$ be its (closed) 
boundary (therefore, $f_i$ maps $\partial R_i$ to the unit circle  
$\mathbb S^1$). With a slight abuse of notation, define $\partial \R 
= \{\partial R_1, \ldots, \partial R_m\}$. Let $P_i \subset \partial R_i$ 
be the part of $\partial R_i$ that is accessible, specifially, not blocked by 
obstacles in $\W$. This means that each $P_i$ is either a single closed 
curve or formed by a finite number of (possibly curved) line segments. 
Define  $\P = \{P_1, \ldots, P_m\} \subset \W$ as the {\em perimeter} 
of $\R$ which must be {\em guarded}. More formally, each $P_i$ is 
homeomorphic to a compact subset of the unit circle (i.e., it is 
assumed that the maximal connected components of $P_i$ are closed 
line segments). For a given $P_i$, each one of its maximal connected 
component is called a {\em perimeter segment} or simply a {\em segment}, 
whereas each maximal connected component of $\partial R_i \backslash P_i$ 
is called a {\em perimeter gap} or simply a {\em gap}. An example setting is 
illustrated in Fig.~\ref{fig:example-boundaries} with two regions. 

\begin{figure}[ht]
	\begin{center}
		\begin{overpic}[width={\ifoc 4in \else 3in \fi},tics=5]
			{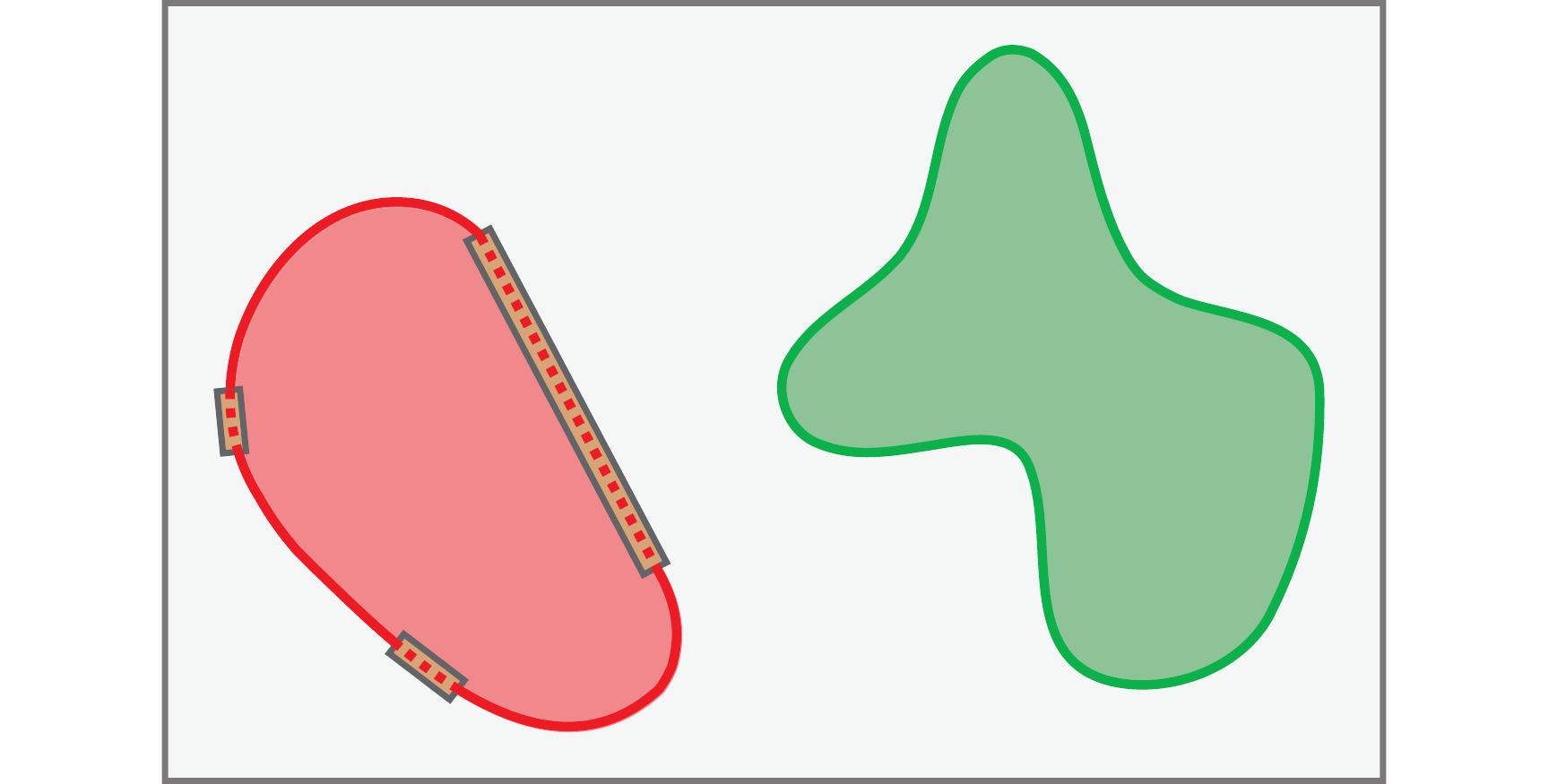}
			\put(26,20){{\small $R_1$}}
			\put(20,39){{\small \textcolor{BrickRed}{$P_1$}}}
			\put(66,28){{\small $R_2$}}
			\put(54,40){{\small \textcolor{ForestGreen}{$P_2$}}}
			\put(82,44){{\small $\W$}}
		\end{overpic}
	\end{center}
	\caption{\label{fig:example-boundaries} An example of a workspace $\W$ 
		with two regions $\{R_1, R_2\}$. Due to three {\em gaps} on $\partial R_1$, 
		marked as dotted lines within long rectangles, $P_1 \subset \partial R_1$ 
		has three {\em segments} (or maximal connected components); $P_2 = \partial 
		R_2$ has a single segment with no gap.}
\end{figure}

After deployment, some number of robots are to {\em cover} the perimeter 
$\P$ such that a robot $j$ is assigned a continuous closed subset $C_j$ 
of some $\partial R_i, 1 \le i \le m$. All of $\P$ must be {\em covered} 
by $\C$, i.e., 
$\bigcup_{P_i \in \P} P_i  \subset \bigcup_{C_j \in \C} C_j$,
which implies that elements of $\C$ need not intersect on their interiors. 
Hence, it is assumed that any two elements of $\C$ may share at most their 
endpoints. Such a $\C$ is called a {\em cover} of $\P$. Given a cover 
$\C$, for a $C_j \in \C$, let $len(C_j)$ denote its length (more formally, 
measure). 

To model heterogeneity of the robots, two models are explored in this
study. In either model, there are $t$ types of robots. In the first model,
the number of robots of each type is fixed to be $n_1, \ldots, n_t$ with 
$n = n_1 + \cdots + n_t$. For a robot $1 \le j \le n$, let $\tau_j$ denote 
its type. Each $1 \le \tau \le t$ type of robots has some 
level of {\em capability} or {\em ability} $a_{\tau} \in \mathbb Z^+$. We 
wish to balance the load among all robots based on their capabilities, 
i.e., the goal is to find cover $\C$ for all robots such that the quantity 
\[
\max_{C_j \in \C} \frac{len(C_j)}{a_{\tau_j}},
\]
which represents the largest coverage-capacity ratio, is minimized. 
We note that when all capacities are the same, e.g., $a_{\tau} = 1$ for 
all robots, this becomes the standard \opg problem studied in \cite{FenHanGaoYu19RSS}. 
We call this version of the perimeter guarding problem {\em optimal 
	perimeter guarding with limited resources} or \opglr. The formal 
definition is as follows.

\begin{problem}[Optimal Perimeter Guarding with Limited Resources 
	(\opglr)] Let there be $t$ types of robots. For each type $1\le \tau 
	\le t$, there are $n_{\tau}$ such robots, each having the same 
	capability parameter $a_{\tau}$. Let $n = n_1 + \cdots + n_t$. 
	Given the perimeter set $\P = \{P_1, \ldots, P_m\}$ of a set of 
	2D regions $\R =\{R_1, \ldots, R_m\}$, find a set of $n$ continuous 
	line segments $\C^* = \{C_1^*, \ldots, C_n^*\}$ such that $\C^*$ covers 
	$\P$, i.e., \begin{align}\label{eq:coverage}
	\bigcup_{P_i \in \P} P_i  \subset \bigcup_{C_j^* \in \C^*} C_j^*,
	\end{align}
	such that a $C_j^*$ is covered by robot $j$ of type $\tau_j$, and such that,
	among all covers $\C$ satisfying~\eqref{eq:coverage}, 
	\begin{align}\label{eq:objective}
	\C^* = \underset{\C}{\mathrm{argmin}} \max_{C_j \in \C} 
	\frac{len(C_j)}{a_{\tau_j}}.
	\end{align}
\end{problem}

Whereas the first model caps the number of robots, the second
model fixes the maximum coverage of each type of robot. That is, for 
each robot type $1 \le \tau \le t$, $n_{\tau}$, the number of robots of type $\tau$,
is unlimited as long as it is non-negative, but each such robot can only cover 
a maximum length of $\ell_{\tau}$. 
At the same time, using each such robot incurs a cost of $c_{\tau}$. The 
goal here is to guard the perimeters with the minimum total cost. We 
denote this problem {\em optimal perimeter guarding with minimum 
	cost} or \opgmc. 

\begin{problem}[Optimal Perimeter Guarding with Minimum Cost
	(\opgmc)] Let there be $t$ types of robots of unlimited quantities. 
	For each robot of type $1\le \tau \le t$, it can guard a length of 
	$\ell_{\tau}\in\mathbb{Z^+}$ with a cost of $c_{\tau}\in\mathbb{Z^+}$. Given the perimeter set
	$\P = \{P_1, \ldots, P_m\}$ of a set of 2D regions $\R =\{R_1, \ldots, 
	R_m\}$, find a set of $n = n_1 + \cdots + n_t$ continuous line segments 
	$\C^* = \{C_1^*, \ldots, C_n^*\}$ where $n_{\tau}$ such segments are 
	guarded by type $\tau$ robots, such that $\C^*$ covers $\P$, i.e., 
	\begin{align}\label{eq:coverage2}
	\bigcup_{P_i \in \P} P_i  \subset \bigcup_{C_j^* \in \C^*} C_j^*,
	\end{align}
	such that a $C_j^*$ is covered by robot $j$ of type $\tau_j$, i.e., 
	$C_j^* \le \ell_{\tau_j}$, and such that,
	among all covers $\C$ satisfying~\eqref{eq:coverage2}, 
	\begin{align}\label{eq:objective2}
	\C^* = \underset{\C}{\mathrm{argmin}} \sum_{1 \le \tau \le t} 
	n_{\tau}c_{\tau}.
	\end{align}
\end{problem}

\section{Computational Complexity for Variable Number of Robot Types}\label{sec:hardness}
We explore in this section the computational complexity of \opglr 
and \opgmc. Both problems are shown to be NP-hard with \opglr 
being strongly NP-hard. We later confirm that \opgmc is weakly 
NP-hard (in Section~\ref{sec:algorithm}).

\subsection{Strong NP-hardness of \opglr}\label{subsec:opglr-hardness}
When the number of types $t$ is a variable, i.e., $t$ is not a constant
and may be arbitrarily large,
\opglr is shown to be NP-hard via the reduction from \tpart \cite{garey1975complexity}:

\vspace*{1mm}
\noindent
PROBLEM: \tpart\\
INSTANCE: A finite set A of $3m$ elements, a bound $B\in \mathbb{Z^+}$, 
and a ``size'' $s(a)\in \mathbb{Z^+}$ for each $a\in A$,
such that each $s(a)$ satisfies $B/4 < s(a) <B/2$ and $\sum_{a\in A} s(a) = mB$.\\
QUESTION: Is there a partition of $S$ into $m$ disjoint subsets $S_1, 
\ldots, S_m$ such that for $1\leq i\leq m$, 
$\sum_{a\in S_i} s(a) = B$?
\vspace*{1mm}

\tpart is shown to be NP-complete in the strong sense\cite{GarJoh79}, 
i.e., it is NP-complete even when all numeric inputs are bounded by a polynomial 
of the input size. 

For the reduction, it is more convenient to work with a decision 
version of the \opglr problem, denoted as \opglrd. In the \opglrd 
problem, $a_{\tau}$ is the actual length robot type $\tau$ covers. 
That is, the coverage length of a robot is fixed. The \opglrd problem 
is specified as follows. 

\vspace*{1mm}
\noindent
PROBLEM: \opglrd\\
INSTANCE: $t$ types of robots where there are $n_{\tau}$ robots for 
each type $1 \le \tau \le t$; $n = n_1 + \cdots + n_t$. A robot of 
type $\tau$ has a coverage capacity $a_{\tau}$. A set of perimeters 
$\P = \{P_1, \ldots, P_m\}$ of a set of 2D regions 
$\R =\{R_1, \ldots, R_m\}$.\\ 
QUESTION: Is there a deployment of $n$ disjoint subsets $C_1, \ldots, C_n$
of $\{\partial R_1, \ldots, \partial R_m\}$ such that 
$P_1 \cup \ldots \cup P_m \subset C_1 \cup \ldots \cup C_n$, where
$C_j$ is a continuous segment for all $1 \le j \le n$, and for each 
$1 \le j \le n$, there is a unique robot whose type $\tau$, $1 \le \tau 
\le t$ satisfies $a_{\tau} \ge len(C_j)$?
\vspace*{1mm}

\begin{theorem}\label{t:opglr-hard}
	\opglr is strongly NP-hard. 
\end{theorem}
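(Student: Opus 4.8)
The plan is to reduce from \tpart, which is NP-complete in the strong sense, to (the threshold-decision form of) \opglr. Given a \tpart instance --- a set $A$ of $3m$ elements with sizes $s(a)$ obeying $B/4 < s(a) < B/2$ and $\sum_{a\in A}s(a)=mB$ --- I would construct an instance of \opglrd (equivalently \opglr) as follows. Create $m$ regions $R_1,\dots,R_m$ whose accessible perimeters $P_1,\dots,P_m$ are each a single segment of length exactly $B$ (no gaps needed). Create one robot per element of $A$, where the robot for $a\in A$ has coverage capability $a_\tau=s(a)$; equal sizes may be folded into a common type with the corresponding multiplicity $n_\tau$, so $n=3m$. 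Every number in this construction is polynomially bounded in the size of the \tpart instance, so if the reduction is correct it establishes \emph{strong} NP-hardness; since the construction is plainly polynomial-time computable, the only real work is proving instance equivalence.

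The equivalence I want is: the \opglr optimum is $\le 1$ --- i.e.\ there is a cover $\C$ with $\max_{C_j\in\C} len(C_j)/a_{\tau_j}\le 1$ --- if and only if the \tpart instance is a yes-instance. For the forward (easy) direction, given a partition $A=S_1\cup\dots\cup S_m$ with $\sum_{a\in S_i}s(a)=B$, I assign to the perimeter $P_i$ the three robots corresponding to $S_i$, laying their segments consecutively along $P_i$ with lengths $s(a)$; these tile $P_i$ exactly, condition~\eqref{eq:coverage} holds, and every ratio equals $1$.

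For the converse, suppose $\C$ achieves ratio $\le 1$. Then each $C_j$ satisfies $len(C_j)\le a_{\tau_j}=s(a_j)$. Because elements of $\C$ have pairwise disjoint interiors and $\bigcup_j C_j \supseteq \bigcup_i P_i$, the segments assigned to a fixed region $R_i$ have union containing $P_i$, so their total length $L_i$ is at least $len(P_i)=B$; summing over $i$ gives $\sum_j len(C_j)=\sum_i L_i\ge mB$. On the other hand $\sum_j len(C_j)\le\sum_j s(a_j)\le\sum_{a\in A}s(a)=mB$, so all inequalities are tight: every robot is used, each on exactly one region, with $len(C_j)=s(a_j)$, and the robot sizes assigned to $R_i$ sum to exactly $B$. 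Since $B/4<s(a)<B/2$, a subset of $A$ summing to $B$ must contain exactly three elements (two sum to strictly less, four to strictly more), so each $R_i$ receives exactly three robots and these triples are the desired partition. Finally, I would note the standard scaling observation that \opglrd (capability $=$ actual coverage length, ``is every $C_j$ coverable?'') and the optimization problem \opglr (``is $\max_j len(C_j)/a_{\tau_j}\le 1$?'') are polynomially interreducible, so hardness transfers to \opglr as stated.

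The main obstacle I anticipate is making the converse fully rigorous: one must argue that no coverage capacity can be wasted --- not on interior overlaps, not on slack between $len(C_j)$ and $a_{\tau_j}$, and not by leaving a robot idle --- and this is precisely what the exact global accounting $\sum_j len(C_j)=mB=\sum_j s(a_j)$ enforces. Once that tightness is in hand, the ``exactly three elements per bin'' structure is inherited for free from the $B/4<s(a)<B/2$ promise of \tpart, and the rest is bookkeeping. A minor secondary point is to confirm that the constructed $P_i$ genuinely conform to the problem's definition of a perimeter (a single closed line segment per region), which the construction above arranges directly.
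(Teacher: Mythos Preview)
Your proposal is correct and follows essentially the same reduction as the paper: both build $m$ perimeters of length $B$ and $3m$ robots whose capabilities are the $3m$ sizes from the \tpart instance, then argue that a cover with ratio $\le 1$ exists iff the \tpart instance is a yes-instance. Your write-up is in fact more detailed than the paper's, which dispatches the equivalence in a single sentence (``the \tpart instance admits a partition \ldots if and only if a valid deployment exists''); your global length-accounting argument for the converse direction ($\sum_j len(C_j)\ge mB$ by disjoint coverage, $\le mB$ by the capability bound, hence equality everywhere) makes explicit exactly the tightness the paper leaves to the reader.
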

\begin{proof}
	A polynomial reduction from \tpart to \opglrd is constructed
	by a restriction of \opglrd. Given a \tpart instance with former notations,
	we apply several restrictions on \opglrd: {\em (i)} there are $3m$ types of robot
	and there is a single robot 
	for each type, i.e., $n_{\tau} = 1$ for $1 \le \tau \le t$, so $n=t=3m$ 
	{\em (ii)} the $3m$ capacities $a_1, \ldots, a_{3m}$ are set to be equal to
	$s(a)$ for each of the $3m$ elements $a\in A$, and {\em (iii)} 
	there are $3m$ perimeters and each perimeter $P_i$ is continuous and
	$len(P_i)=B$ for all $1 \le i \le m$.

	With the setup, the reduction proof is straightforward. Clearly, the 
	\tpart instance admits a partition of $A$ into $S_1, \ldots, 
	S_m$ such that $\sum_{a \in S_i} s(a) = B$ for all $1\leq i\leq m$ 
	if and only if a
	valid depolyment exists in the corresponding \opglrd instance. 
	It is clear that the reduction from \tpart to 
	\opglrd is polynomial (in fact, linear). Based on the 
	reduction and because \tpart is strongly NP-hard, so is \opglrd 
	and \opglr.
\end{proof}

\begin{remark}
	One may also reduce weakly NP-hard problems, e.g., \twopart
	\cite{karp1972reducibility}, to \opglr for variable number of robot 
	types $t$. Being strongly NP-hard, \opglr is unlikely to admit pseudo-polynomial 
	time solutions for variable $t$. This contrasts with a later result 
	which provides a pseudo-polynomial time algorithm for \opglr for 
	constant $t$, as one might expect in practice where robots have limited 
	number of types. We also note that Theorem~\ref{t:opglr-hard} continues 
	to hold for a single perimeter with multiple segments, each 
	having a length $B$ in previous notation, separated by ``long'' gaps. Obviously, 
	\opglrd is in NP, thus rendering it NP-complete. 
\end{remark}

\subsection{NP-hardness of \opgmc}
The minimum cost \opg variant, \opgmc, is also NP-hard, which may be 
established through reduction from the \subsetsum problem 
\cite{karp1972reducibility}:

\vspace*{1mm}
\noindent
PROBLEM: \subsetsum \\
INSTANCE: A set $B$ with $|B| = n$ and a weight function $w: B \to 
\mathbb Z^+$, and an integer $W$.\\ 
QUESTION: Is there a subset $B' \subseteq B$ such that $\sum_{b \in B'} 
w(b) = W$?
\vspace*{1mm}

\begin{theorem}\label{t:opgmc-hard}
	\opgmc is NP-hard. 
\end{theorem}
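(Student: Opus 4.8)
The plan is to mimic the strategy used for Theorem~\ref{t:opglr-hard}: reduce from \subsetsum, producing an \opgmc instance in which a sufficiently cheap cover exists precisely when some subset of $B$ has weight exactly $W$. Given a \subsetsum instance $(B,w,W)$ with $|B|=n$ and $S:=\sum_{b\in B}w(b)$, I would first dispose of the trivial cases $W\le 0$ and $W\ge S$ by emitting a fixed yes/no \opgmc instance. For the main case $0<W<S$, introduce one robot type $\tau_b$ per element $b\in B$ whose guarding length $\ell_{\tau_b}$ and cost $c_{\tau_b}$ both encode $w(b)$, together with one auxiliary ``discounted'' robot type that can guard a length-$W$ stretch strictly more cheaply than any combination of the $\tau_b$'s. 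The perimeter would be a single region whose boundary is (essentially) a circle of circumference $S$, subdivided into $n$ ``element pieces'' by gaps, and the decision threshold $K$ (something like $K=S-1$) would be chosen so that a cover of cost at most $K$ forces the circle to split into one length-$W$ arc handled by the discounted robot and a complementary length-$(S-W)$ portion tiled exactly by element robots.

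For correctness, the forward direction is straightforward: a subset $B'$ with $\sum_{b\in B'}w(b)=W$ yields the cover that assigns the discounted robot to a length-$W$ arc and the robots $\{\tau_b: b\in B\setminus B'\}$, laid end to end, to the complementary length-$(S-W)$ arc, attaining cost exactly $K$. For the reverse direction I would first show that in any optimal cover no robot need guard strictly less than its maximum length --- shrinking an arc to a subarc never violates the coverage condition and never raises the cost --- so an optimal cover tiles the boundary by arcs whose lengths are exactly the $\ell$-values of the robot types used. Then I would argue that the discounted type is used exactly once and on a length-$W$ arc, so the remaining arcs have lengths drawn from $\{w(b):b\in B\}$ and sum to $S-W$, from which a genuine subset of $B$ of weight $W$ can be read off.

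The main obstacle is precisely that last step: because each robot type has \emph{unlimited} multiplicity, a length-$(S-W)$ stretch could a priori be tiled by reusing a few element types in a way that sums to $S-W$ without matching any subset of $B$ (for instance three length-$3$ arcs covering a length-$9$ gap even though no subset of $\{3,5,7\}$ sums to $9$). The crux of the construction is therefore to engineer the perimeter so that no cover of cost at most $K$ can reuse an element type: for example, bake a large additive offset $M\gg S$ into every length and make the gaps between the $n$ element pieces too wide for any robot to bridge, so that covering a piece of length $M+w(b)$ with two robots is infeasible while covering it with one robot pins down that robot's type; one then verifies that the only way to realize cost $K$ is to use each $\tau_b$ at most once. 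The rest is routine bookkeeping --- the reduction runs in polynomial time and all numbers stay polynomial in the bit length of the \subsetsum instance (which is why this only yields \emph{weak} NP-hardness, consistent with the pseudo-polynomial algorithm developed in Section~\ref{sec:algorithm}), and \opgmc is clearly in NP, so it is in fact NP-complete.
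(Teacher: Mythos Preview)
Your high-level plan---reduce from \subsetsum and engineer the instance so that a cheap cover forces a tiling that witnesses the target sum---is the right one, and you correctly put your finger on the central obstacle: each robot type is available in unlimited supply. But the construction you sketch does not actually close that gap. The ``discounted'' robot is itself available in unlimited quantity, so nothing prevents a cover from using it $\lceil S/W\rceil$ times and undercutting your threshold $K\approx S-1$ without any subset structure at all (e.g.\ if the discounted type has $\ell=W$, $c=W-1$, then $\lceil S/W\rceil$ copies cost at most $S-\lceil S/W\rceil<S-1$). Your fallback---add a large offset $M$ to every length and isolate $n$ pieces by wide gaps---is incompatible with the discounted-robot idea (a length-$W$ robot cannot cover a length-$(M+w(b))$ piece), and on its own it collapses: once the pieces are isolated, the cheapest cover of piece $i$ is simply one element robot with the smallest $\ell_{\tau_b}\ge M+w(b_i)$, so the optimal cost depends only on the multiset $\{w(b)\}$ and carries no information about $W$. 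In short, you have not exhibited a gadget that simultaneously forces each element type to be used at most once \emph{and} ties the cost to the existence of a weight-$W$ subset; the sentence ``one then verifies that the only way to realize cost $K$ is to use each $\tau_b$ at most once'' is precisely the step that is missing.

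The paper handles the multiplicity issue by a different device. It uses a single gap-free segment and sets $c_\tau=\ell_\tau$ for every type, so the decision question becomes whether the segment length $L$ can be realised \emph{exactly} as a sum of available $\ell_\tau$'s. It then introduces $2n$ types in $n$ pairs: pair $i$ has lengths $(2^{n+1}+2^i)W'$ and $w(b_i)+(2^{n+1}+2^i)W'$, where $W'\ge\sum_b w(b)$. The common $2^{n+1}W'$ offset forces any cost-$L$ cover to use exactly $n$ robots, and the distinct $2^iW'$ tags then force exactly one robot from each pair; the choice within pair $i$ records whether $b_i$ lies in the subset, and the residual $w(b_i)$ contributions must sum to $W$. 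This binary-encoding trick is what your construction lacks, and it is the reason the paper can work with a single segment and no gaps.
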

\begin{proof}
	Given a \subsetsum instance, we construct an \opgmc instance with a 
	single perimeter containing a single segment with length $L$ to be 
	specified shortly. Let there be $t=2n$ types of robots. For $1 \le i 
	\le n$, let robot type $2i-1$ have $\ell_{2i-1} = c_{2i-1} = 
	w(b_i) + (2^{n + 1} + 2^i)W'$ and let robot type $2i$ have $\ell_{2i} 
	= c_{2i} = (2^{n + 1} + 2^i)W'$. Here, $W'$ can be any integer number no less than 
	$\sum_{b\in B} w(b)$. Set $L = W + (n2^{n+1} + 2^n + \ldots 
	+ 2^1)W'$. We ask the ``yes'' or ``no' decision question of whether there 
	are robots that can be allocated to have a total cost no more than $L$ 
	(equivalently, equal to $L$, as the cost density $c_\tau/l_\tau$
	is always $1$).
	
	Suppose the \subsetsum instance has a yes answer that uses a subset
	$B' \subseteq B$. Then, the \opgmc instance has a solution with cost $L$ 
	that can be constructed as follows. For each $1 \le i \le n$, a single 
	robot of type $2i - 1$ is taken if $b_i \in B'$. Otherwise, a single 
	robot of type $2i$ is taken. This allocation of robots yields a total 
	length and cost of $L$. 
	
	For the other direction, we first show that if the \opgmc instance 
	is to be satisfied, it can only use a single robot from type $2i-1$ 
	or $2i$ for all $1 \le i \le n$. First, if more than $n$ robots are 
	used, then the total cost exceeds $(n + 1)2^{n+1}W' > L$ as $W\leq W'$.
	Similarly, if less than $n$ 
	robots are used, the total length is at most 
	$(n-1)2^{n+1}W'+(2^{n+1}-1)W'+ W' < L$. 
	Also, to match the $(2^n + \ldots + 2)W'$ part of the cost, exactly one robot 
	from type $2i-1$ or $2i$ for all $1 \le i \le n$ must be taken. 
	Now, if the \opgmc decision instance has a yes answer, if a robot 
	of type $2i -1$ is used, let $b_i \in B$ be part of $B'$, which 
	constructs a $B'$ that gives a yes answer to the \subsetsum instance.
\end{proof}
\begin{remark}
	It is also clear that the decision version of the \opgmc problem is 
	NP-complete. The \subsetsum is a weakly NP-hard problem that admits 
	a pseudo-polynomial time algorithm \cite{dantzig1957discrete}. 
	As it turns out, \opgmc, which shares similarities with \subsetsum
	and \ttkp (in particular, \ttukp \cite{ukphardness}), though NP-hard, 
	does admit a pseudo-polynomial time algorithm as well. 
\end{remark}

\section{Exact Algorithms for \opglr and \opgmc}\label{sec:algorithm}
In this section, we describe three exact algorithms for solving the two 
variations of the \opg problem. First, we present a pseudo-polynomial time
algorithm for \opglr when the number of robot types, $t$, is a fixed constant. 
Given that \opglr is strongly NP-hard, this is in a sense a best possible 
solution. 
For \opgmc, in addition to providing a pseudo-polynomial algorithm for 
arbitrary $t$, which confirms that \opgmc is weakly NP-hard, we also provide
a polynomial time approximation scheme (PTAS). We then further show the 
possibility of solving \opgmc in polynomial time when $t$ is a fixed constant. 
We mention that our development in this section focuses on the single 
perimeter case, i.e., $m = 1$, as the generalization to arbitrary $m$ is 
straightforward using techniques described in \cite{FenHanGaoYu19RSS}. With this in mind, 
we also provide the running times for the general setting with arbitrary $m$ 
but refer the readers to \cite{FenHanGaoYu19RSS} on how these running times can be derived. 

For presenting the analysis and results, for the a perimeter $P$ that we work 
with, assume that it has $q$ perimeter segments $S_1, \ldots, S_q$ that need 
to be guarded; these segments are separated by $q$ gaps $G_1, \ldots, G_q$. 
For $1 \le i, i' \le q$, define $S_{i\sim i'} = S_i \cup G_i \cup S_{i+1} \cup \ldots 
\cup G_{i'-1} \cup S_{i'}$ where $i'$ may be smaller than $i$ (i.e., $S_{i\sim i'}$
may wrap around $G_q$),
For the general case with $m$ perimeters, assume that a perimeter $P_i$ has
$q_i$ segments. 

\subsection{Pseudo-Polynomial Time Algorithm for \opglr with Fixed Number
of Robot Types}
\SetKw{Continue}{continue}
\SetKw{True}{true}
\SetKw{False}{false}
\SetKwComment{Comment}{\%}{}
\SetKwInOut{Input}{Input}
\SetKwInOut{Output}{Output}
\def\inc{{\sc Inc}\xspace}
\def\knapsack{\textbf{\textsc{Knapsack}}\xspace}
\def\opglrfeasible{{\sc OPG-lr-Feasible}\xspace}
\def\opgmcdp{{\sc OPG-mc-DP}\xspace}
We set to develop an algorithm for \opglr for arbitrary $t$, the number of robot 
types; the algorithm runs in pseudo-polynomial time when $t$ is a constant. 
At a higher level, our proposed algorithm works as follows. First, our main effort 
here goes into deriving a feasibility test for \opglrd as defined in 
Section~\ref{subsec:opglr-hardness}. With such a feasibility test, we can then 
find the optimal $\frac{len(C_j)}{a_{\tau_j}}$ in \eqref{eq:objective} via binary search.
Let us denote the optimal value of $\frac{len(C_j)}{a_{\tau_j}}$ as $\ell^*$. 

\subsubsection{Feasibility Test for \opglrd} The feasibility test for \opglrd 
essentially tries different candidate $\ell$ to find $\ell^*$. Our implementation uses 
ideas similar to the pseudo-polynomial time algorithm for the \knapsack problem which
is based on dynamic programming (DP). In the test, we work with a fixed starting point on 
$P$, which is set to be the counterclockwise end point of a segment $S_i$, $1 \le i \le q$. 
Essentially, we maintain a $t$ dimensional array $M$ where dimension $\tau$
has a size of $n_{\tau} +1$. An element of the array, $M[n_1']\ldots[n_t']$, holds the 
maximal distance starting from $S_i$ that can be covered by $n_1'$ type 1 robots, 
$n_2'$ type 2 robots, and so on. The DP procedure \opglrfeasible($i, \ell$), outlined in 
Algorithm~\ref{algo:opglrd}, incrementally builds this array $M$. For convenience,
in the pseudo code, $M[\vec{x}]$ denotes an element of $M$ with $\vec{x}$ being 
a $t$ dimensional integer vector. 

\begin{algorithm}\label{algo:opglrd}
	\DontPrintSemicolon
	\KwData{$n_1, \ldots, n_t$, $a_1, \ldots, a_t$,
		$S_1, \ldots, S_q$, $G_1, \ldots, G_q$}
	\KwResult{\True or \False,  indicating whether $S_1, \ldots, S_q$ can be covered}
		Initialize $M$ as a $t$ dimensional array with dimension $\tau$ having a size of $n_{\tau} + 1$;\;
		$\ell_{\tau} \leftarrow a_{\tau}\ell$ for all $1\le \tau \le t$;\;
		\For{$ \vec{x} \in [0, n_1]\times\dots\times[0,n_t]$}{
            $M[\vec{x}]\leftarrow 0$;\;
			\For{$j = 1$ \KwTo $t$}{
				\lIf{$\vec{x}_j = 0$}{\Continue;}
				$\vec{x'}\leftarrow\vec{x}$; $\vec{x'}_j \leftarrow \vec{x'}_j - 1$;\;
				$M[\vec{x}]\leftarrow max$($M[\vec{x}]$, \inc($M[\vec{x'}], \ell_j$));\;
			}
		}
		\Return{$M[n_1]\ldots[n_t] \ge len(S_{i\sim {i-1}})$};
	\caption{\opglrfeasible($i, \ell$)}
\end{algorithm}

In Algorithm~\ref{algo:opglrd}, the procedure \inc($L, \ell$) checks how much of the 
perimeter $P$ can be covered when an additional coverage length $\ell$ is added, 
assuming that a distance of $L$ (starting from some $S_i$) is already covered. An 
illustration of how \inc($L, \ell$) works is given in Fig.~\ref{fig:inc}.

\begin{figure}[ht]
	\begin{center}
		\begin{overpic}[width={\ifoc 4in \else 3in \fi},tics=5]
			{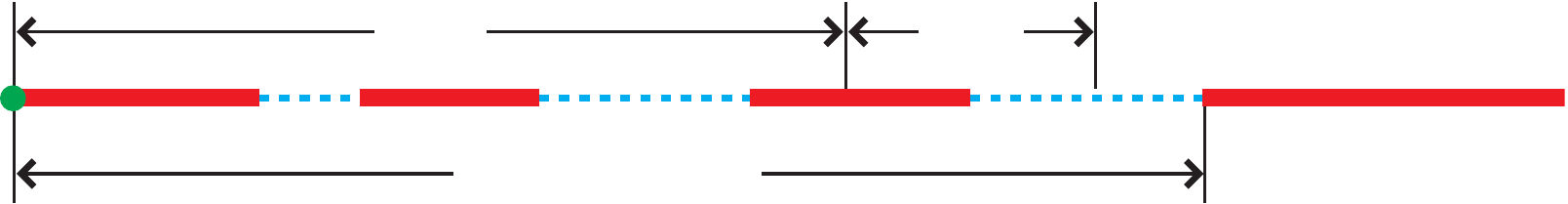}
			\put(26,10){{\small $L$}}
			\put(61,10){{\small $\ell$}}
			\put(30,1){{\small \textsc{Inc}($L$, $\ell$)}}
		\end{overpic}
	\end{center}
    \caption{\label{fig:inc}Suppose starting from the fixed left point, a 
		length of $L$ on the boundary is successfully guarded by a group of 
		robots. Then, a robot	with coverage capacity $\ell$ is appended to 
		the end of the group of robots to increase the total guarded distance. 
		In the figure, the added additional capacity $\ell$ can fully cover 
		the third red segment plus part of the third (dashed) gap. Because 
		there is no need to cover the rest of the third gap, 
		\textsc{Inc}($L$, $\ell$) extends to the end of the gap.}
\end{figure}



By simple counting, the complexity of the algorithm is
$O(q\cdot t\cdot\Pi_{\tau=1}^{t} (n_{\tau}+1))$. However, the amortized
complexity of \inc($\cdot$) for each $\tau$ is $O(q+n_{\tau})$; the algorithm thus 
runs in $O(t\cdot\Pi_{{\tau}=1}^{t}(n_{\tau}+1)+q\cdot\sum_{{\tau}=1}^{t} 
\Pi_{{\tau'}\neq {\tau}} (n_{\tau'} +1))$, 
which is pseudo-polynomial for fixed $t$. After trying every possible starting 
position $i$ with \opglrfeasible($i, \ell$), for a fixed candidate $\ell$, 
\opglrd is solved in $O(q \cdot t\cdot\Pi_{{\tau}=1}^{t}(n_{\tau}+1) + 
q^2\cdot\sum_{{\tau}=1}^{t} \Pi_{{\tau'}\neq {\tau}} (n_{\tau'} +1))$.

\subsubsection{Solving \opglr using Feasibility Test for \opglrd}
Using \opglrfeasible($i, \ell$) as a subroutine to check feasibility for a given $\ell$, 
bisection can be applied over candidate $\ell$ to obtain $\ell^*$. For completing 
the algorithm, one needs to establish when the bisection will stop (notice that, 
even though we assume that $a_\tau \in \mathbb{Z^+}$, for each $1\leq \tau \leq t$, $\ell^*$ need not be 
an integer). 

To derive the stop criterion, we note that given the optimal $\ell^*$, there must 
exist some $S_{i\sim i'}$ that is ``exactly'' spanned by the allocated robots.
That is, assume that $S_{i\sim i'}$ is covered by $n_1'$ of type $1$ robots
and $n_2'$ of type $2$ robots, and so on, then 
\begin{align}\label{eq:exact}
\ell^* = \frac{len(S_{i\sim i'})}{\sum_{1 \le \tau \le t} a_{\tau}\cdot n_{\tau}'}.
\end{align}

\eqref{eq:exact} must hold for some $S_{i\sim i'}$ because if not, the solution 
is not tight and can be further improved. Therefore, the bisection process for 
locating $\ell^*$ does not need to go on further after reaching a certain granularity\cite{FenHanGaoYu19RSS}.
With this established, using 
similar techniques from \cite{FenHanGaoYu19RSS} (we omit the technical detail as it is quite 
complex but without additional new ideas beyond beside what is already covered 
in \cite{FenHanGaoYu19RSS}), we could prove that the full algorithm needs 
no more than $O(q\log(\sum_{\tau}n_{\tau}+q)$ calls to \opglrfeasible($i, \ell$).
This directly implies that \opglr also admits a pseudo-polynomial algorithm for fixed $t$.
\subsubsection{Multiple Perimeters}
Also using techniques developed in \cite{FenHanGaoYu19RSS}, the single perimeter 
result can be readily generalized to multiple perimeters. We omit the mechanical
details of the derivation and point out that the computational complexity in this case becomes
$\tilde{O}( (m-1)\cdot((\Pi_{\tau=1}^t n_\tau) / \max_\tau n_\tau)^2 + 
\sum_{k=1}^{m} (t\cdot q_k\cdot \Pi_{{\tau}=1}^{t}(n_{\tau}+1)+
q_k^2\sum_{{\tau}=1}^{t} \Pi_{{\tau'}\neq {\tau}} (n_{\tau'} +1)))$.

\subsection{Polynomial Time Algorithm for \opgmc with Fixed Number of Robot Types}
The solution to \opgmc will be discussed here. A method based on DP 
will be provided first, which leads to a polynomial time algorithm for a 
fixed number of robot types and a pseudo-polynomial time algorithm when the number 
of robot types is not fixed. For the latter case, a polynomial time approximation 
scheme (PTAS) will also be briefly described.
\subsubsection{Dynamic Programming Procedure for \opgmc}
\def\sol{{\sc Sol}}
\def\presol{{\sc PreSolve}}
When no gaps exist, the optimization problem becomes a covering 
problem as follows. Let $c_{\tau}$, $\ell_{\tau}$, $n_{\tau}$ correspond to the cost, 
coverage length, and quantity of robot type ${\tau}$, respectively, and let total 
length to cover be $L$. We are to solve the optimization problem
\begin{align}\label{eq:ip}
    \min \sum_{\tau} c_{\tau} \cdot n_{\tau} \quad s.t.\, \quad
    \sum_{\tau} \ell_{\tau} \cdot n_{\tau} \geq L, n_{\tau}\geq 0.
\end{align}

Let the solution to the above integer programming problem be \sol($L$).
Notice that, for $S_{i\sim i'}:=\{S_i,\ G_i, \dots, 
G_{i'-1}, S_{i'}\}$, the minimum cost cover is by either: {\em (i)} 
covering the total boundary without skipping any gaps, 
or {\em (ii)} skipping or partially covering some gap, for example $G_k, 
i \le k \le j-1$.
In the first case, the minimum cost is exactly \sol$(\lceil len(S_{i\sim(i+k)}\rceil)$.
In the second case, the optimal structure for the two subsets of perimeter 
segments $S_{i\sim k}$ and $S_{(k+1)\sim j}$ still holds. This means that the 
continuous perimeter segments $S_{i\sim j}$ can be divided into two parts, 
each of which can be treated separately. This leads to a DP approach for \opgmc.
With $M[i][j]$ denoting the minimum cost to cover $S_{i\sim j}$, the DP recursion 
is given by
\[
	\scalebox{0.93}{$M[i][j] = \min(\textit{\sol}(\lceil len(S_{i\sim j})\rceil), \displaystyle\min_k(M[i][k]+M[k+1][j]))$}
\]

The DP procedure is outlined in Algorithm~\ref{alg:opgmc}. In the pseudo code, 
it is assumed that indices of $M$ are modulo $q$, e.g., $M[2][q+1] 
\equiv M[2][1]$. $tmp$ is a temporary variable. 
\begin{algorithm}
    \DontPrintSemicolon
    \KwData{$\ell_1, \dots, \ell_t$, $c_1, \ldots, c_t$,
    	$S_1, \ldots, S_q$, $G_1, \ldots, G_q$}
    \KwResult{$c^*$, the minimum covering cost}
    $M \leftarrow$ a $q\times q$ matrix; $c^* \leftarrow \infty$; \;
    \For{$k \leftarrow 0$ \KwTo $q-1$}{
        \For{$i\leftarrow 1 $ \KwTo $q$}{
            $tmp \leftarrow\ $\sol$(\lceil len(S_{i\sim(i+k)}) \rceil)$; \;
            \For{$j\leftarrow i$ \KwTo $i+k-1$}{
                $tmp \leftarrow \min(tmp, M[i][j] + M[j+1][i+k])$;
            }
            $M[i][i+k] \leftarrow c$;\;
            \lIf{$k = q-1$}{$c^* \leftarrow \min(c^*, M[i][i+k])$;}
        }
    }
    \Return{$c^*;$}
    \caption{\opgmcdp}
    \label{alg:opgmc}
\end{algorithm}

\subsubsection{A Polynomial Time Algorithm for \opgmc for a Fixed Number of Robot Types}
We mention briefly that, by a result of Lenstra \cite{len83}, the optimization problem 
~\eqref{eq:ip} is in P (i.e., polynomial time) when $t$ is a constant. The running time of 
the algorithm \cite{len83} is however exponential in $t$. 

\subsubsection{A Pseudo-polynomial Time Algorithm for Arbitrary $t$}
As demonstrated in the hardness proof, similarities exist between \opg and the \knapsack 
problem. The connection actually allows the derivation of a pseudo-polynomial time algorithm
for arbitrary $t$. To achieve this, we use a routine to pre-compute \sol($L$), called 
\presol(), which is itself a DP procedure similar to that for the \knapsack problem. The 
pseudo code of \presol() is given in Algorithm~\ref{algo:presol}.
\presol() runs in time $O(t\cdot\lceil len(\partial R)\rceil))$. Overall, 
Algorithm~\ref{alg:opgmc} then runs in time $O(q^3+t\cdot\lceil len(\partial R\rceil))$.

\begin{algorithm}\label{algo:presol}
	\DontPrintSemicolon
	\KwData{$\ell_1, \ldots, \ell_t$, $c_1, \ldots, c_t$}
	\KwResult{A lookup table for retrieving \sol($L$)}
		$I_{max} = \lceil len(\partial R)\rceil$; \Comment{\small $I_{max}$ is an integer.}
		$M' \leftarrow$ an array of length $I_{max} + 1$; 
		$M'[0]\leftarrow 0$;\;
		\For{$L \leftarrow$ $1$ \KwTo $I_{max}$}{
			$M'[L]\leftarrow \infty$; \;
			\For{${\tau}\leftarrow 1$ \KwTo $t$}{
				$tmp \leftarrow (L<\ell_{\tau}\ ?\ 0\ :\ M'[L-\ell_{\tau}]) + c_{\tau}$;\;
				$M'[L] \leftarrow min(M'[L], tmp)$;\;
			}
		}
		\Return{$M'$}
	\caption{{\sc PreSolve}}
\end{algorithm}

With the establishment of a pseudo-polynomial time algorithm for \opgmc, we  
have the following corollary. 
\begin{corollary}
	\opgmc is weakly NP-hard. 
\end{corollary}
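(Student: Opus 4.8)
The plan is to read off this corollary as an immediate consequence of two facts already in hand: the NP-hardness of \opgmc established in Theorem~\ref{t:opgmc-hard}, and the pseudo-polynomial time algorithm for \opgmc with an arbitrary number of robot types given by Algorithm~\ref{alg:opgmc} together with the precomputation routine \presol() of Algorithm~\ref{algo:presol}. Recall that a problem counts as \emph{weakly} NP-hard exactly when it is NP-hard and yet admits an algorithm whose running time is bounded by a polynomial in the combinatorial size of the instance and in the largest numeric value appearing in it, as opposed to a polynomial in the bit-length of the encoding. So the corollary is really just a repackaging of work done earlier in the section.

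The first step is to cite Theorem~\ref{t:opgmc-hard}: the decision version of \opgmc is NP-hard via the reduction from \subsetsum, so no polynomial time algorithm exists unless $\mathrm{P} = \mathrm{NP}$. The second step is to extract, from the running-time analysis already carried out, that \opgmc is nonetheless solvable in pseudo-polynomial time: \presol() builds its table in time $O(t\cdot\lceil len(\partial R)\rceil)$, and Algorithm~\ref{alg:opgmc} then fills $M[\cdot][\cdot]$ over $O(q^2)$ entries with an $O(q)$ inner minimization, for an overall bound of $O(q^3 + t\cdot\lceil len(\partial R)\rceil)$. Since $\lceil len(\partial R)\rceil$ is a numeric magnitude, potentially exponential in the number of bits used to encode the segment lengths, this is polynomial in the value of the input but not in its size, i.e., exactly a pseudo-polynomial time bound, and the procedure is correct and halts on every instance. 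Putting the two steps together yields the statement.

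The only point that warrants care, and the closest thing to an obstacle here, is making sure the two halves speak about the same notion of ``input.'' Concretely, the reduction of Theorem~\ref{t:opgmc-hard} produces an \opgmc instance whose \emph{combinatorial} size is polynomial (there are $2n$ robot types and a single perimeter with one segment) while all of the blow-up is confined to the numeric parameters $\ell_\tau$, $c_\tau$, and $L$; this is precisely the signature of a weakly, rather than strongly, NP-hard problem, and it is consistent with the existence of the pseudo-polynomial algorithm. For completeness I would also record the familiar consequence in the other direction: \opgmc cannot be strongly NP-hard unless $\mathrm{P} = \mathrm{NP}$, because on instances whose numbers are polynomially bounded in the instance size the $O(q^3 + t\cdot\lceil len(\partial R)\rceil)$ algorithm runs in genuine polynomial time, which together with Theorem~\ref{t:opgmc-hard} restricted to such instances would force $\mathrm{P} = \mathrm{NP}$.
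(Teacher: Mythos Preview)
Your proposal is correct and matches the paper's approach exactly: the paper states the corollary immediately after establishing the pseudo-polynomial time algorithm, treating it as an immediate consequence of Theorem~\ref{t:opgmc-hard} together with the $O(q^3 + t\cdot\lceil len(\partial R)\rceil)$ bound, without writing out a separate proof. Your write-up simply makes explicit the reasoning the paper leaves implicit, and your additional remarks about the structure of the reduction and the contrapositive regarding strong NP-hardness are accurate elaborations.
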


\subsubsection{FPTAS for Arbitrary $t$}
When the number of robot types is not fixed, Lenstra's algorithm\cite{len83} or 
its variants no longer run in polynomial time. We briefly mention that, 
by slight modifications of a FPTAS for \ttukp problem from \cite{ukpfptas}, a FPTAS 
for \opgmc can be obtained that runs in time $O(q^3 + q^2 \cdot \frac{t}{\epsilon^3})$, 
where $(1+\epsilon)$ is the approximation ratio for both \opgmc and ~\eqref{eq:ip}. 

\subsubsection{Multiple Perimeters} For \opgmc, when there are multiple 
perimeters, e.g., $P_1, \ldots, P_m$, a optimal solution can be obtained 
by optimally solving \opgmc for each perimeter $P_i$ individually and 
then put together the solutions.


\section{Performance Evaluation and Applications}\label{sec:application}
In this section, we provide examples illustrating the typical 
optimal solution structures of \opglr and \opgmc computed by 
our DP algorithms. Using an application scenario, solutions to 
\opglr and \opgmc are also compared. Then, computational 
results from extensive numerical evaluations are presented, 
confirming the effectiveness of these algorithms. The 
implementation is done using the Python and all 
computations are performed on an Intel(R) Core(TM) i7-7700 CPU@3.6GHz 
with 16GB RAM. 

\subsection{Basic Optimal Solution Structure}
Fig.~\ref{fig:opglrm} shows the typical outcome of solving an \opglr 
instance with two perimeters ($m = 2$) for two types of robots with 
$n_1 = 3, a_1 = 5$, and $n_2 = 5, a_2 = 8$. 
In the figure, the red segments are parts of the two perimeters that 
must be guarded. The three orange (resp., five green) segments across 
the two perimeters indicate the desired coverage regions of the three 
(resp., five) type $1$ (resp., type $2$) robots. These coverage regions 
correspond to the optimal solution returned by the DP algorithm. As 
may be observed, the optimal solution is somewhat complex with robots 
of both types on each of the two perimeters; a gap on the second boundary 
also gets covered. The coverage lengths for a robot type are generally 
different; this is due to adjustments that shrink some robots' coverage. 
For example, the first perimeter has a very short orange cover because 
the corresponding perimeter segment is short and gaps around it need 
not be covered (The adjustment procedure is also shown in the video). 
\begin{figure}[!ht]
    \centering
    \includegraphics[scale = 0.6]{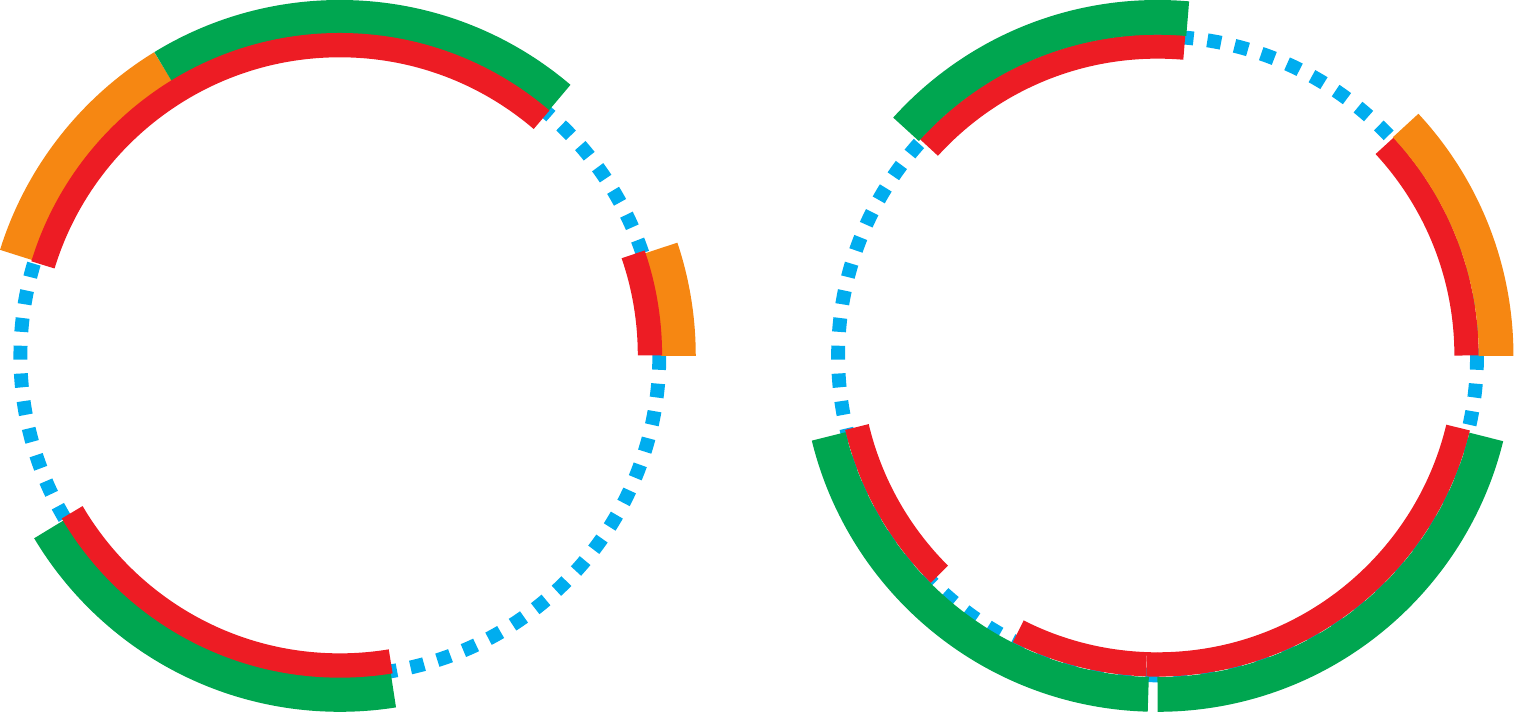}
    \caption{An \opglr problem and an associated optimal solution. The 
		problem has two perimeters and $t = 2$ with $n_1$=3, $n_2$=5, 
		$a_1$=5, $a_2$=8. The boundaries are shown as circles for ease of 
		illustration.
		}
		\label{fig:opglrm}
\end{figure}

Shifting our attention to \opgmc, Fig.~\ref{fig:opgmc} illustrates the 
structure of an optimal solution to a problem with three types of robots 
with capacities and costs being $\ell_1=11, c_1=2$, $\ell_t=30, 
c_2=4$, and $\ell_3=55, c_3=7$, respectively. In this case, the majority 
of the deployed robots are of type $2$ with $\ell_2=30, c_2=4$. Only one 
type $1$ and one type $3$ robots are used. The four perimeter segments are 
covered by three robot groups. 
%
%
The only type $3$ robot guards (the purple segment) across two different 
perimeter segments. Coverage length adjustment is also performed to avoid 
the unnecessary coverage of some gaps. 

\begin{figure}[!ht]
    \centering
    \includegraphics[scale = 0.4]{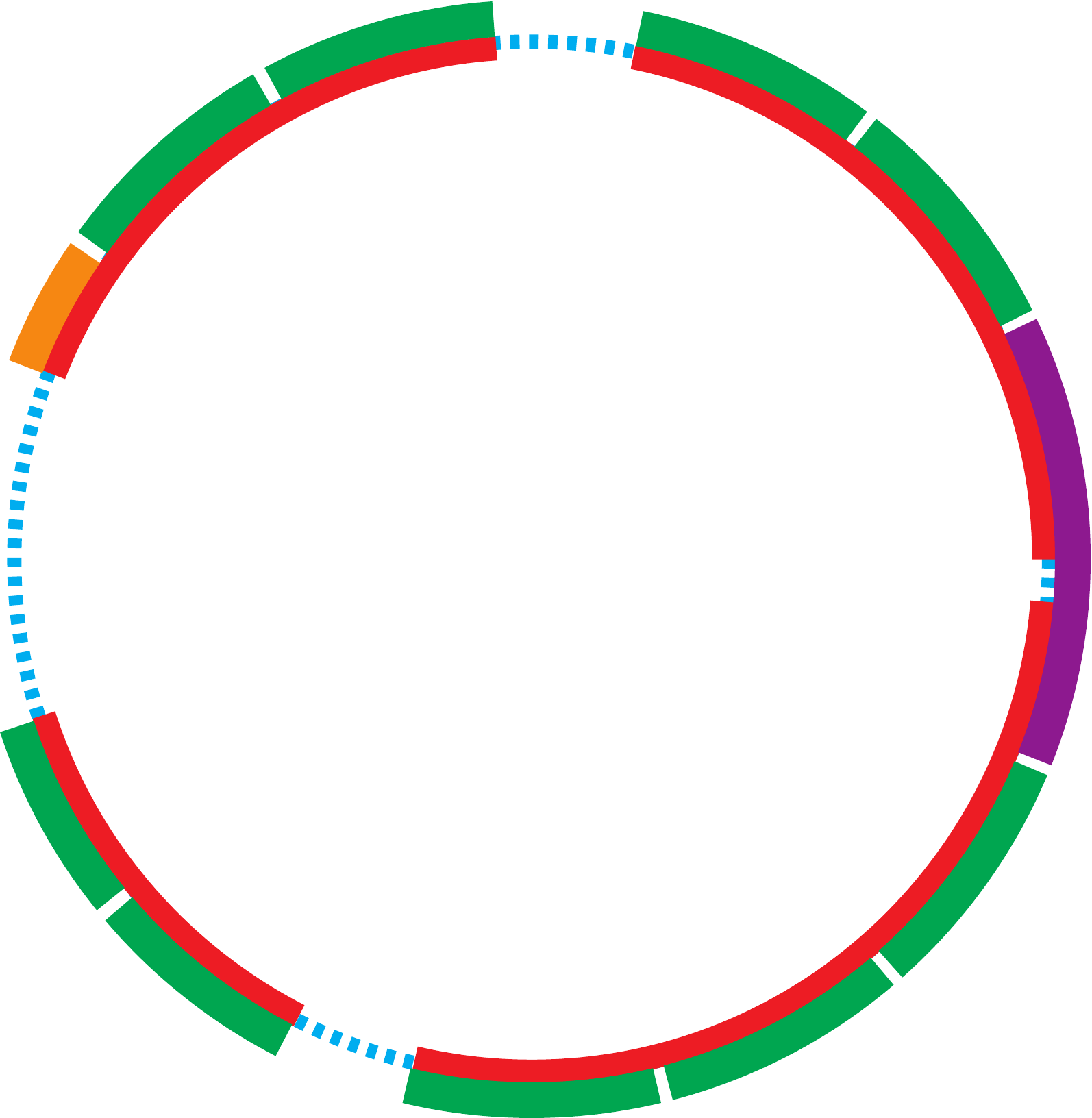}
    \caption{An \opgmc problem and an associated optimal solution. The 
		problem has four (red) perimeter segments and three types of robots
		with $\ell_1=11, c_1=2$ (orange), $\ell_t=30, c_2=4$ (green), 
		and $\ell_3=55, c_3=7$ (purple), respectively.}
		\label{fig:opgmc}
\end{figure}

\subsection{A Robotic Guarding and Patrolling Application}
In this subsection, as a potential application, the DP algorithms for 
\opglr and \opgmc are employed to solve the problem of securing the 
perimeter of the Edinburgh castle, an example used in 
\cite{FenHanGaoYu19RSS}. As shown in Fig.~\ref{fig:castle} (minus 
the orange and green segments showing the solutions), the central 
region of the Edinburgh castle has tall buildings on its boundary 
(the blocks in brick red); these parts of the boundary are the gaps 
that do not need guarding. In the figure, the top sub-figure shows 
the optimal solution for an \opglr instance and an \opgmc instance with 
a total of $11$ robots. The bottom sub-figure is a slightly updated 
\opgmc instance with slightly higher $c_2$. 
\begin{figure}[!ht]
    \centering
    \includegraphics[scale = 0.5]{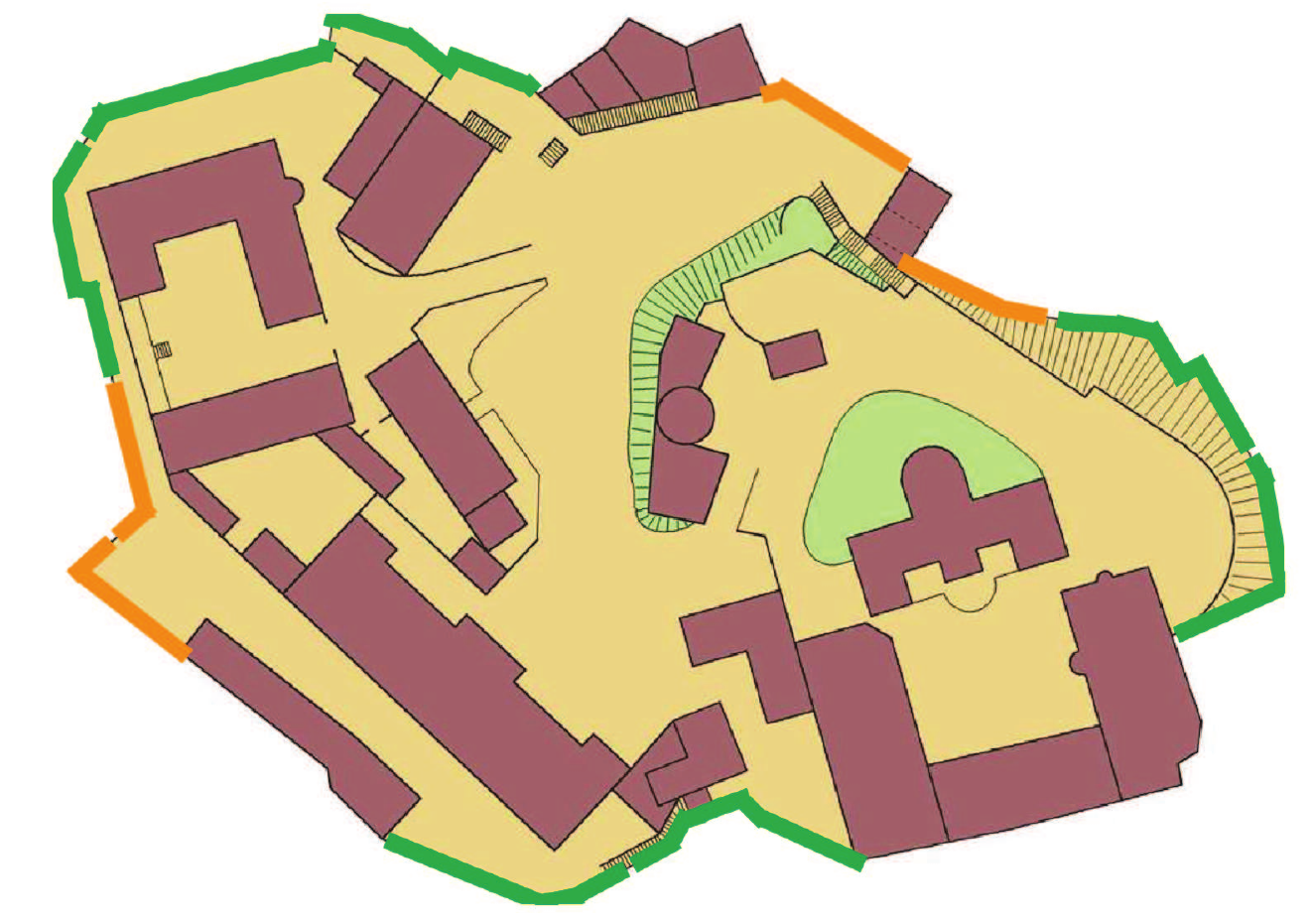}
    \includegraphics[scale = 0.5]{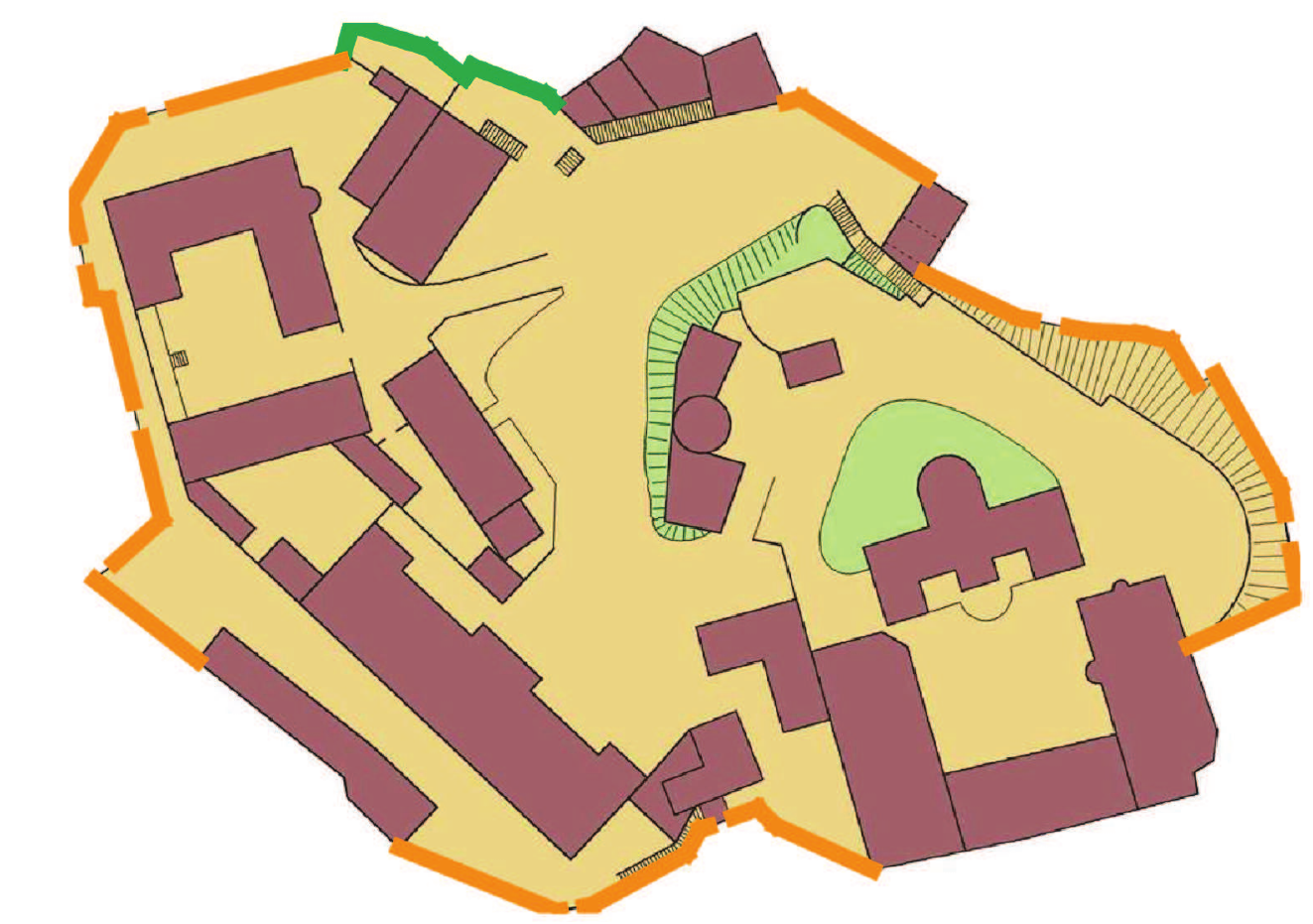}
    \caption{[left] \opglr solution with $n_1 = 4, n_2 = 7, 
		c_1:c_2 = 2:3$ and \opgmc solution with $\ell_1 = 150, 
		c_1 = 100, \ell_2 = 225, c_2 = 145$, and total boundary $3058$. Cost 
		of \opgmc solution is $1415$.
		[right] \opgmc solution with $\ell_1 = 150, c_1 = 100, \ell_2 = 
		225, c_2 = 155$. Cost of solution ($13$ type $1$, $1$ type $2$) is 
		$1455$.
		In both solutions, covers by type $1$ (resp., 
		type $2$) robots are shown in orange (resp., green).
		}
		\label{fig:castle}
\end{figure}

It can be observed that the results, while having non-trivial structures, 
make intuitive sense. For the top sub-figure, solutions to both \opglr 
and \opgmc (because robot with larger capacity is slightly lower in 
relative cost) use mainly higher capacity robots to cover longer perimeter 
segments and use the lower capacity robots mostly fillers. The solution 
covers a small gap at the bottom. For the bottom sub-figure, while only 
small changes are made to the cost, because the longer segment is more 
expensive to use now, the first type of robot is used mainly. 

\subsection{Computational Performance}
With Section~\ref{sec:algorithm} fully establishing the correctness and 
asymptotic complexity of the pseudo-polynomial time algorithms, here, the 
running time of these algorithms are experimentally evaluated. In doing 
so, the main goal is demonstrating that, despite the hardness of \opglr 
and \opgmc, the proposed algorithms could solve the target problems under 
reasonably broad settings in a scalable way. For results presented in 
this subsection, each data point is an average over 10 randomly generated 
instances. 

The first two numerical evaluations (Table~\ref{tab:opglr} and 
Table~\ref{tab:mopglr}) focus on the running times of the pseudo-polynomial 
time algorithms for \opglr over single and multiple perimeters, 
respectively. In these two tables, $t$ and $q$ are the number of types 
and the number of segments, respectively. For each type $\tau$, a 
capacity ($a_{\tau}$) is randomly sampled as an integer between $1$ and 
$100$, inclusive. The number of robots available for each type ($n_{\tau}$) 
is sampled uniformly between $5$ and $15$, inclusive. For the multiple 
perimeters case, the parameter $m$ represents the number of perimeters for 
a given instance.

For the single perimeter case (Table~\ref{tab:opglr}), the results show 
that the pseudo-polynomial time algorithm is effective for up to five 
types of robots, for dozens of robots. We expect a more efficient 
(e.g., C++ based) implementation should be able to effectively handle 
up to five types of robots with the total number of robots being around 
a hundred, on a typical PC. This is likely sufficient for many practical 
applications which have limited types and numbers of robots. Since the 
algorithm has exponential dependency on $t$, it becomes less efficient 
for larger $t$ as expected.  

\begin{table}[htbp]
	\centering
	\begin{tabularx}{\columnwidth}{|c|X|X|X|X|X|X|}
		\hline
		\diagbox{$t$}{$q$}&  \quad 5 &   \quad 10 &\quad 20& \quad 30 & \quad 40&\quad 50 \\
		\hline
		\renewcommand{\arraystretch}{1.05}
		2&0.022 &0.044 &0.131 &0.208 &0.326 &0.516 \\\hline
        3&0.281 &0.714 &1.670 &2.577 &4.107 &4.708 \\\hline
        4&5.504 &16.07 &41.68 &71.55 &109.9 &138.9 \\\hline
        5&29.53 &75.60 &243.6 &443.4 &528.0 &725.0 \\\hline
	\end{tabularx}
	\caption{Running time in seconds used by the DP algorithm for \opglr over 
	a single perimeter.
	}
	\label{tab:opglr}
\vspace*{-1mm}
\end{table}

Table~\ref{tab:mopglr} illustrates the running time of the DP algorithm for 
\opglr over multiple perimeters. As can be readily observed, the impact of 
the number of perimeters $m$ on the running time is relatively small; the 
number of robot types is still the determining factor for running time. In 
this case, our proposed solution is effective for $t$ up to $4$ and starts to 
slow down a robot types become larger than $4$. 
\begin{table}[htbp]
	\centering
	\renewcommand{\arraystretch}{1.05}
    \begin{tabularx}{\columnwidth}{|c|X|X|X|X|X|X|}
        \hline
        {\multirow{2}{*}{\diagbox{$m$}{$q$}} }&\multicolumn{2}{c|}{10}&\multicolumn{2}{c|}{20}&\multicolumn{2}{c|}{30} \\
        \cline{2-7}
         &\,\,\,$t$=3 & $\,\,\,t$=4& $\,\,\,t$=3 & $\,\,\,t$=4& \,\,\,$t$=3  & \,\,\,$t$=4\\
        \hline
        2&3.148 &133.2 &7.077 &198.4 &10.33 &260.0 \\\hline
        3&4.828 &194.1 &10.125 &290.6 &15.52 &376.7 \\\hline
        4&6.131 &256.8 &12.485 &381.3 &19.75 &514.3 \\\hline
        5&7.622 &321.7 &15.355 &476.2 &24.31 &605.8 \\\hline
    \end{tabularx}
    \caption{Running time in seconds used by the DP algorithm for \opglr over multiple perimeters.}
    \label{tab:mopglr}
\vspace*{-1mm}
\end{table}

Table~\ref{tab:opgmc} provides performance evaluation of \opgmcdp. Since 
there is no difference between single and multiple perimeters for \opgmc,
only problems with single perimeters are attempted. Here, for each robot 
type, the cost is an integer randomly sampled between $1$ and $20$, and 
the capacity is computed as five times the cost plus a random integer 
between $1$ and $20$. In the table, $L = \partial R$, the total length 
of the entire boundary. 
Given \opgmc's lower computational complexity, the DP algorithm, 
\opgmcdp, can effectively deal with over a few hundred types of robots 
with ease. 
\begin{table}[!ht]
	\centering
	\renewcommand{\arraystretch}{1.04}
    \begin{tabularx}{\columnwidth}{|c|X|X|X|X|X|X|}
        \hline
        \multirow{2}{*}{\diagbox{$t$}{$L$}} & 
        \multicolumn{2}{c|}{$10^2$}&\multicolumn{2}{c|}{$10^4$} &\multicolumn{2}{c|}{$10^6$} \\
        \cline{2-7}
        &$q$=20&$q$=50&$q$=20&$q$=50&$q$=20&$q$=50\\
        \hline
        3&0.006 &0.064 &0.041 &0.098 &3.040 &3.144 \\\hline
        10&0.005 &0.066 &0.094 &0.155 &9.423 &9.409 \\\hline
        30&0.009 &0.070 &0.261 &0.320 &26.10 &28.59 \\\hline
        100&0.014 &0.077 &0.910 &0.969 &91.28 &93.20 \\\hline
        300&0.030 &0.091 &2.652 &2.938 &275.6 &270.7 \\\hline
    \end{tabularx}
    \caption{Running time in seconds used by \opgmcdp algorithm.}
    \label{tab:opgmc}
\end{table}
\vspace*{-1mm}

\section{Conclusion and Discussions}\label{sec:conclusion}
In this paper, we investigate two natural models of optimal perimeter 
guarding using heterogeneous robots, where one model (\opglr) limits 
the number of available robots and the second (\opgmc) seeks to 
optimize the total cost of coverage. 

These formulations have many potential applications. One application 
scenario we envision is the deployment of multiple agents or robots 
as ``emergency responders'' that are constrained to travel on the 
boundary. An optimal coverage solution will then translate to minimizing 
the maximum response time anywhere on the perimeter (the part that 
needs guarding). The scenario applies to \opg, \opglr, and \opgmc. 

Another application scenario is the monitoring of the perimeter 
using robots with different sensing capabilities. A simple heterogeneous 
sensing model here would be robots equipped with cameras with different 
resolutions, which may also be approximated as discs of different radii. 
The model makes sense provided that the region to be covered is much 
larger than the sensing range of individual robots and assuming that the 
boundary has relatively small curvature as compared to the inverse of the 
radius of the smallest sensing disc of the robots. For boundary with 
relatively small curvature, our solutions would apply well to the sensing 
model by using the diameter of the sensing disc as the 1D sensing range. 
As the region to be covered is large, covering the boundary will require
much fewer sensors than covering the interior.

On the computational complexity 
side, we prove that both \opglr and \opgmc are NP-hard, with \opglr 
directly shown to be strongly NP-hard. This is in stark contrast to 
the homogeneous case, which admits highly efficient low polynomial 
time solutions \cite{FenHanGaoYu19RSS}. The complexity study also 
establishes structural similarities between these problems and 
classical NP-hard problems including \tpart, \ttkp, and \subsetsum.

On the algorithmic side, we provide methods for solving both \opglr 
and \opgmc exactly. For \opglr, the algorithm runs in pseudo-polynomial 
time in practical settings with limited types of robots. In 
this case, the approach is shown to be computationally effective. 
For \opgmc, a pseudo-polynomial time algorithm is derived for the 
general problem, which implies that \opgmc is weakly NP-hard. In 
practice, this allows us to solve large instances of \opgmc. We 
further show that a polynomial time algorithm is possible for 
\opgmc when the types of robots are fixed. 

With the study of \opg \cite{FenHanGaoYu19RSS} for homogeneous and 
heterogeneous cases, some preliminary understanding has been 
obtained on how to approach complex 1D guarding problems. 
Nevertheless, the study so far is limited to {\em one-shot} settings
where the perimeters do not change. In future research, we would like 
to explore the more challenging case where the perimeters evolve 
over time, which requires the solution to be dynamic as well. Given 
the results on the one-shot settings, we expect the dynamic setting
to be generally intractable if global optimal solutions are desired, 
potentially calling for iterative and/or approximate solutions. 

We recognize that our work 
does not readily apply to a visibility-based sensing model, which is also 
of interest. Currently, we are also exploring covering of the interior
using range-based sensing. As with the OPG work, we want to push for 
optimal or near-optimal solutions when possible.
\vspace*{-1mm}

\bibliographystyle{IEEEtran}
\bibliography{../bib/bib}

\end{document}